\newcommand{\sgn}{\operatorname{sgn}}
\newtheorem{theorem}{Theorem}
\title{Stable distributions and pseudo-processes related to fractional Airy functions}
\author{ 
	\href{https://orcid.org/0000-0002-6163-044X}{Manfred Marvin Marchione}\\
	Department of Statistical Sciences\\
	Sapienza University of Rome\\
	\texttt{manfredmarvin.marchione@uniroma1.it} \\
	\And
	\href{https://orcid.org/0000-0002-6421-533X}{Enzo Orsingher} \\
	Department of Statistical Sciences\\
	Sapienza University of Rome\\
	\texttt{enzo.orsingher@uniroma1.it}}
\date{\today}
\begin{document}
\maketitle

\begin{abstract}
In this paper we study pseudo-processes related to odd-order heat-type equations composed with Lévy stable subordinators. The aim of the article is twofold. We first show that the pseudo-density of the subordinated pseudo-process can be represented as an expectation of damped oscillations with generalized gamma distributed parameters. This stochastic representation also arises as the solution to a fractional diffusion equation, involving a higher-order Riesz-Feller operator, which generalizes the odd-order heat-type equation. We then prove that, if the stable subordinator has a suitable exponent, the time-changed pseudo-process becomes a genuine Lévy stable process. This result permits us to obtain a power series representation for the probability density function of an arbitrary asymmetric stable process of exponent $\nu>1$ and skewness parameter $\beta$, with $0<\lvert\beta\lvert<1$. The methods we use in order to carry out our analysis are based on the study of a fractional Airy function which emerges in the investigation of the higher-order Riesz-Feller operator.
\end{abstract}

\keywords{Pseudo-processes \and Higher-order heat equation \and Stable processes \and Fractional Airy function \and Riesz-Feller derivative}

\section{Introduction}
\noindent While the analysis of higher-order heat-type equations was first tackled in some special cases by Bernstein \cite{bernstein} and Burwell \cite{burwell}, a probabilistic insight on the topic was established some decades later by Krylov \cite{krylov}. In his paper, the author constructed a signed measure on the space $\Omega$ of measurable functions $x=x(t),\;t>0,$ by defining the cylinder sets
$$I_n=\{x:\;a_k\le x(t_k)\le b_k,\;k=1,...,n\},\qquad 0\le t_0<...< t_{n-1}.$$
\noindent A finitely additive signed measure $\mathbb{P}$ on $\Omega$ is constructed by the rule
$$\mathbb{P}(I_n)=\int_{a_1}^{b_1}\hdots \int_{a_{n}}^{b_{n}}\prod_{k=1}^{n}u(x_k-x_{k-1},t_k-t_{k-1})\;dx_1\hdots dx_{n}$$
\noindent where $x_0=0$, $t_0=0$, $u(x,t)$ is the fundamental solution to the higher-order heat equation
$$\frac{\partial u}{\partial t}=c_m\frac{\partial^m u}{\partial x^m},\qquad m\ge2$$
\noindent and $c_m$ is a suitable coefficient. As pointed out by Miyamoto \cite{miyamoto}, the signed measure $\mathbb{P}$ is well defined on the algebra $\mathcal{F}$ consisting of all cylinder sets and it is finitely additive on $\mathcal{F}$. Since $\mathbb{P}$ is signed, Kolmogorov's extension theorem cannot be applied to extend $\mathbb{P}$ to the $\sigma$-algebra generated by $\mathcal{F}$. Moreover, Krylov \cite{krylov} proved that $\mathbb{P}$ has infinite total variation. Therefore $\mathbb{P}$ cannot be extended to $\sigma\left(\mathcal{F}\right)$.\\
\noindent In Krylov's paper only the even-order case $m=2n$ was considered. Hochberg \cite{hochberg} also developed an It\^{o}-type stochastic calculus for the even-order pseudo-processes. Similar ideas for the construction of pseudo-processes were proposed by Ladokhin \cite{ladokhin}, Daletsky and Fomin \cite{daletsky} and Miyamoto \cite{miyamoto} who investigated equations in which two space derivatives appear. Debbi \cite{debbi1, debbi2} proposed a space-fractional extension of the even-order heat-type equation involving the Riesz operator.\\
\noindent The pseudo-processes related to odd-order heat-type equations were first considered by Orsingher \cite{orsingher} for the third order heat equation and were then extended to all possible orders by Lachal \cite{lachal}. In these papers the main concern was to evaluate the distribution of some functionals of pseudo-processes like the sojourn time or the maximum. Nakajima and Sato \cite{nakajima} have obtained the joint distribution of the hitting time and hitting place for the third order pseudo-process. The sample paths of pseudo-process $x(t)$ display a sort of continuity or moderate discontinuity, as stated by Lachal \cite{lachal3} and Nishioka \cite{nishioka1, nishioka2} by studying the distribution of $x(\tau_a)$ with $$\tau_a=\inf\{t\ge0:\;x(t)>a\}.$$
Alternative approaches have been proposed in the literature for the probabilistic construction of pseudo-processes. Bonaccorsi and Mazzucchi \cite{bonaccorsi} and Lachal \cite{lachal2} obtained the solutions to higher order heat-type equations in terms of the scaling limit of suitable random walks. Orsingher and Toaldo \cite{orsinghertoaldo} constructed the pseudo-processes as the limit of compound Poisson processes with steps represented by pseudo-random variables with signed measures.\\
\noindent The starting point of the present research is the paper by Orsingher and D'Ovidio \cite{orsingherdovidio} in which the fundamental solution to the odd-order heat-type equation
\begin{equation}\label{intro1}
\frac{\partial u}{\partial t}(x,t)=(-1)^n\frac{\partial^{2n+1} u}{\partial x^{2n+1}}(x,t),\qquad x\in\mathbb{R},\;t>0,\;n\in\mathbb{N}\end{equation}
\noindent is expressed as
\begin{equation}\label{intro2}u_{2n+1}(x,t)=\frac{1}{\pi x}\mathbb{E}\left[e^{-b_n x\;G_{2n+1}(1/t)}\sin\left(a_n x\;G_{2n+1}(1/t)\right)\right]\end{equation}
where $G_{\gamma}(\tau)$ is a random variable with generalized gamma distribution having probability density function
$$g_{\gamma}(y;\tau)=\gamma\frac{y^{\gamma-1}}{\tau}\exp\left(-\frac{y^{\gamma}}{\tau}\right),\qquad y,\gamma,\tau>0$$
and $$a_n=\cos\frac{\pi}{2(2n+1)},\qquad b_n=\sin\frac{\pi}{2(2n+1)}.$$
\noindent It is known (see Orsingher \cite{orsingher}) that, in the case $n=1$, the solution to equation (\ref{intro1}) admits the following representation in terms of the Airy function of the first kind:
$$u_3(x,t)=\frac{1}{\sqrt[3]{3t}}\text{Ai}\left(\frac{x}{\sqrt[3]{3t}}\right).$$
\noindent For general values of $n$, we start our analysis by exploring the connection between the probabilistic representation (\ref{intro2}) and the higher-order Airy functions
\begin{equation}\label{intro3}\text{Ai}_{2n+1}(x)=\frac{1}{\pi}\int_0^{+\infty}\cos\left(sx+\frac{s^{2n+1}}{2n+1}\right)ds\qquad n\in\mathbb{N}\end{equation}
\noindent which were profoundly investigated by Askari and Ansari \cite{askari}. We then analyze the pseudo-process $X_{2n+1}(t)$ time-changed with an independent stable subordinator $S_{\nu}(t)$ of exponent $\nu,\;0<\nu<1$.  We show that, as a consequence of the subordination, the probabilistic representation (\ref{intro2}) is transformed into
$$\mathbb{P}(X_{2n+1}(S_{\nu}(t))\in dx)/dx=\frac{1}{\pi x}\mathbb{E}\left[e^{-b_n x\;G_{\nu(2n+1)}(1/t)}\sin\left(a_n x\;G_{\nu(2n+1)}(1/t)\right)\right]$$
where $G_{\gamma}(\tau)$, $a_n$ and $b_n$ are defined as in formula (\ref{intro2}).\\
\noindent In the final section of the paper, we examine a fractional extension of equation (\ref{intro1})
\begin{equation}\label{intro4}
\begin{dcases}
\frac{\partial u}{\partial t}(x,t)={}_xD_{\alpha,\theta}u(x,t),\qquad x\in\mathbb{R},\;t>0,\;\alpha>1\\
u(x,0)=\delta(x).
\end{dcases}
\end{equation}
\noindent where the space operator $D_{\alpha,\theta}$ is a higher-order Riesz-Feller derivative of which we give an explicit representation. The analysis of equation (\ref{intro4}) is carried out by defining a fractional generalization of the Airy function 
$$\normalfont{\text{Ai}}_{\alpha}(x)=\frac{1}{\pi}\int_0^{+\infty}\cos\left(x\gamma+\frac{\gamma^{\alpha}}{\alpha}\right)d\gamma,\qquad \alpha>1$$
\noindent for which we also give the following power series representation:
$$\normalfont{\text{Ai}}_{\alpha}(x)=\frac{1}{\pi\alpha^{\frac{\alpha-1}{\alpha}}}\sum_{k=0}^{\infty}\frac{x^k\alpha^{\frac{k}{\alpha}}}{k!}\Gamma\left(\frac{k+1}{\alpha}\right)\sin\left(\pi\frac{(k+1)(\alpha+1)}{2\alpha}\right).$$
For $\theta=1$ and $\alpha>1$, the solution to the Cauchy problem (\ref{intro4}) is given by
\begin{equation}\label{intro5}u_{\alpha}(x,t)=\frac{1}{(\alpha t)^{\frac{1}{\alpha}}}\normalfont{\text{Ai}}_{\alpha}\left(\frac{x}{(\alpha t)^{\frac{1}{\alpha}}}\right),\qquad x\in\mathbb{R},\;t>0.\end{equation}
\noindent The extension to the case $0<\theta<1$ can be achieved by time-changing the pseudo-process $X_{\alpha}(t)$, having pseudo-distribution (\ref{intro5}), with a stable subordinator $S_{\theta}(t)$ of exponent $0<\theta<1$.\\
\noindent We conclude our analysis by proving that the subordinated pseudo-process $$Y_{\alpha,\theta}(t)=X_{\alpha}(S_{\theta}(t))$$
\noindent is, for $0<\alpha\theta<2$, a stable process with characteristic function $$\mathbb{E}\left[e^{i\gamma Y_{\alpha,\theta}(t)}\right]=e^{-t\lvert \gamma\lvert^{\alpha\theta}\cos\frac{\pi\theta}{2}\left(1+i\tan\frac{\pi\theta}{2}\sgn\gamma\right)}$$ and its pseudo-density becomes a genuine probability density function. This implies that, by giving a power series representation for the distribution of $Y_{\alpha,\theta}(t)$, we are able to obtain the exact distribution of an arbitrary asymmetric stable process of exponent $1<\nu<2$ and skewness parameter $0<\beta<1$. Our result is consistent with that reported in the book by Zolotarev \cite{zolotarev} and shows that stable processes can be represented as pseudo-processes time-changed with stable subordinators.


\section{Odd-order pseudo-processes and Airy functions}
\noindent Explicit representations for the solution to the odd-order heat-type equation
\begin{equation}\label{oddorder}
\begin{dcases}
\frac{\partial u}{\partial t}(x,t)=(-1)^n\frac{\partial^{2n+1} u}{\partial x^{2n+1}}(x,t),\qquad x\in\mathbb{R},\;t>0,\;n\in\mathbb{N}\\
u(x,0)=\delta(x)
\end{dcases}\end{equation}
\noindent have been proposed in different forms in the literature. The connection between the solution to equation (\ref{oddorder}) and the higher-order Airy function
\begin{equation}\label{generalairydef}\text{Ai}_{2n+1}(x)=\frac{1}{\pi}\int_0^{+\infty}\cos\left(sx+\frac{s^{2n+1}}{2n+1}\right)ds\qquad n\in\mathbb{N}\end{equation}
\noindent was highlighted by Askari and Ansari \cite{askari}. The authors studied the generalized Airy function (\ref{generalairydef}) as a solution to the ordinary differential equation
\begin{equation}\label{a2n}y^{(2n)}(x)+(-1)^nxy(x)=0.\end{equation}
\noindent Similarly to the classical Airy equation, equation (\ref{a2n}) can be solved by applying the generalized Laplace transform method. By splitting then the complex plane into $2(2n+1)$ sectors of amplitude $\frac{\pi}{2(2n+1)}$, $2n$ linearly independent solutions are obtained among which the solution (\ref{generalairydef}) arises.\\
\noindent In the same paper the authors pointed out that, by solving equation (\ref{oddorder}) with a Fourier transform approach, the solution reads
\begin{align}u_{2n+1}(x,t)=&\frac{1}{2\pi}\int_{-\infty}^{+\infty}e^{-i\gamma x - it\gamma^{2n+1}}d\gamma\nonumber\\
=&\frac{1}{\pi}\int_{0}^{+\infty}\cos\left(\gamma x+t\gamma^{2n+1}\right)d\gamma\nonumber\\
=&\frac{1}{t^{\frac{1}{2n+1}}\;(2n+1)^{\frac{1}{2n+1}}}\;\text{Ai}_{2n+1}\left(\frac{x}{t^{\frac{1}{2n+1}}\;(2n+1)^{\frac{1}{2n+1}}}\right).\label{generalairy}
\end{align}
The representation (\ref{generalairy}) is crucial for our purposes since most of our probabilistic considerations emerge from the study of generalized Airy functions.\\
\noindent For $n=1$, the classical Airy function admits the well-known power series representation
\begin{equation}\label{classicalairypowerseries}\text{Ai}_3(z)=\frac{2}{3^{\frac{7}{6}}}\sum_{k=0}^{+\infty}\left(\frac{z}{3^{\frac{2}{3}}}\right)^k\frac{\sin\left(\frac{2\pi}{3}(k+1)\right)}{\Gamma\left(\frac{k}{3}+1\right)\Gamma\left(\frac{k+2}{3}\right)}.\end{equation}
The power series expansion (\ref{classicalairypowerseries}) can be extended to the higher-order Airy function (\ref{generalairydef}) as shown by Ansari and Askari \cite{ansari}. In the following theorem we provide a different proof which generalizes the one proposed by Watson \cite{watson}, pag. 189, for the classical case $n=1$.
\smallskip

\begin{theorem}\label{airythm}The higher-order Airy function (\ref{generalairydef}) admits the following series representation for $x\in\mathbb{R}$:
\begin{equation}\label{airyseries}\normalfont{\text{Ai}}_{2n+1}(x)=\frac{1}{\pi(2n+1)^{\frac{2n}{2n+1}}}\sum_{k=0}^{\infty}\frac{x^k(2n+1)^{\frac{k}{2n+1}}}{k!}\sin\left(\pi\frac{(k+1)(n+1)}{2n+1}\right)\Gamma\left(\frac{k+1}{2n+1}\right)\end{equation}\end{theorem}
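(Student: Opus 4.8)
The plan is to complexify the cosine integral and evaluate it by rotating the contour of integration onto a ray where the oscillatory factor becomes a genuine exponential decay, thereby turning a conditionally convergent integral into an absolutely convergent one that can be expanded term by term. Writing $m=2n+1$, I start from
$$\text{Ai}_{2n+1}(x)=\frac{1}{\pi}\,\mathrm{Re}\int_0^{+\infty}e^{i\left(sx+\frac{s^m}{m}\right)}\,ds,$$
where the improper integral is read as $\lim_{R\to\infty}\int_0^R$. The guiding observation is that on the ray $z=re^{i\pi/(2m)}$ one has $iz^m/m=-r^m/m$, so the phase $e^{is^m/m}$ is converted into the decaying weight $e^{-r^m/m}$.

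The first step is the rotation itself. I close the contour with the segment $[0,R]$ on the real axis, the arc $z=Re^{i\phi}$ for $0\le\phi\le\pi/(2m)$, and the return along $\arg z=\pi/(2m)$; since $e^{i(zx+z^m/m)}$ is entire, Cauchy's theorem makes the three contributions cancel. The main obstacle — and essentially the only delicate point — is showing that the arc contribution vanishes as $R\to\infty$, the subtle case being $x<0$, where the factor $e^{izx}$ grows. On the arc the modulus of the integrand equals $\exp\!\left(-Rx\sin\phi-\tfrac{R^m}{m}\sin(m\phi)\right)$, and Jordan's inequality $\sin(m\phi)\ge\tfrac{2}{\pi}m\phi$ on $[0,\pi/(2m)]$ together with $-Rx\sin\phi\le R|x|\phi$ bounds the exponent above by $\phi\bigl(R|x|-\tfrac{2}{\pi}R^m\bigr)$, which is negative once $R^{m-1}>\tfrac{\pi}{2}|x|$. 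The resulting estimate $\int_0^{\pi/(2m)}e^{-cR^m\phi}\,R\,d\phi=O(R^{1-m})\to0$ kills the arc and yields
$$\int_0^{+\infty}e^{i\left(sx+\frac{s^m}{m}\right)}\,ds=e^{i\pi/(2m)}\int_0^{+\infty}e^{\,ixr\,e^{i\pi/(2m)}}\,e^{-r^m/m}\,dr.$$

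Having rotated, I expand $e^{ixr\,e^{i\pi/(2m)}}$ as a power series in $r$ and integrate term by term; this is now legitimate because $\int_0^{+\infty}e^{|x|r}e^{-r^m/m}\,dr<\infty$ dominates the series of absolute values. Each moment reduces to a Gamma value via $v=r^m/m$:
$$\int_0^{+\infty}r^k e^{-r^m/m}\,dr=m^{\frac{k+1}{m}-1}\,\Gamma\!\left(\frac{k+1}{m}\right).$$
Collecting the phases $i^k e^{i\pi/(2m)}e^{ik\pi/(2m)}=e^{i\left[\frac{k\pi}{2}+\frac{(k+1)\pi}{2m}\right]}$ and taking the real part gives
$$\text{Ai}_{2n+1}(x)=\frac{1}{\pi}\sum_{k=0}^{\infty}\frac{x^k}{k!}\,m^{\frac{k+1}{m}-1}\,\Gamma\!\left(\frac{k+1}{m}\right)\cos\!\left[\frac{k\pi}{2}+\frac{(k+1)\pi}{2m}\right].$$

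The last step is the trigonometric simplification. With $m=2n+1$ one has $\tfrac{k\pi}{2}+\tfrac{(k+1)\pi}{2m}=\tfrac{\pi[(m+1)k+1]}{2m}$, and since $\tfrac{\pi}{2}+\tfrac{\pi}{2m}=\tfrac{\pi(n+1)}{2n+1}$, the identity $\cos\alpha=\sin(\alpha+\tfrac{\pi}{2})$ turns the cosine into $\sin\!\bigl(\tfrac{\pi(k+1)(n+1)}{2n+1}\bigr)$. Rewriting $m^{\frac{k+1}{m}-1}=m^{\frac{k}{m}}/m^{\frac{m-1}{m}}$ to extract the prefactor $1/\bigl(\pi(2n+1)^{\frac{2n}{2n+1}}\bigr)$ then reproduces exactly \eqref{airyseries}. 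Everything after the contour rotation is routine; the genuine work is the uniform control of the arc, which is where I expect the argument to need the most care.
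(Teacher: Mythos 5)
Your proof is correct and follows essentially the same route as the paper: rotate the contour by $\pi/(2m)$ to convert the oscillatory phase into the weight $e^{-r^m/m}$, expand the remaining exponential, integrate termwise against this weight to produce the Gamma factors, and simplify the phase; the only cosmetic difference is that you take the real part of the one-sided integral with a single sector, whereas the paper keeps the two-sided Fourier integral and uses two conjugate sectors, which amounts to the same computation. Your explicit bound on the arc (Jordan's inequality applied to $\sin(m\phi)$, giving the $O(R^{1-m})$ decay) is in fact more careful than the paper's bare appeal to ``Jordan's lemma,'' which does not apply verbatim to an integrand of the form $e^{i(zx+z^m/m)}$.
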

\begin{proof}We start by observing that \begin{equation}\label{integrand}\text{Ai}_{2n+1}(x)=\frac{1}{2\pi}\int_{-\infty}^{+\infty}\exp\left(itx+i\frac{t^{2n+1}}{2n+1}\right)dt.\end{equation}
Consider a contour enclosing two circular sectors of radius $R$, centered at the origin, such that the arc of the first sector starts at the point $R$ and ends at $R\exp\{i\pi\frac{1}{4n+2}\}$ while the second sector has an arc starting at
$R\exp\{i\pi\frac{4n+1}{4n+2}\}$ and ending at $-R$ (see figure \ref{fig:contour1}). By the Cauchy integral theorem, the integrand of formula (\ref{integrand}) has null integral along the considered contour. Moreover, because of Jordan's lemma, the integral along the arcs of the circular sectors converge to 0 as $R\to+\infty$.\\

\begin{figure}[t]
\centering
\includegraphics[scale=0.9]{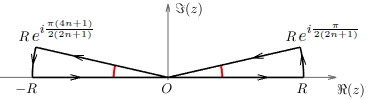}

\caption{the contour used for the proof of theorem \ref{airythm} encloses two circular sectors of radius $R$ centered at the origin. The arc of the first sector starts at the point $R$ and ends at $R\exp\{i\pi\frac{1}{4n+2}\}$ while the second sector has an arc starting at $R\exp\{i\pi\frac{4n+1}{4n+2}\}$ and ending at $-R$. The integral along the arcs converges to 0 as $R\to+\infty$.}
\label{fig:contour1}
\end{figure}

\noindent Thus, by summing the integrals along the two contours and taking the limit for $R\to+\infty$, we can write
\begin{align}\text{Ai}_{2n+1}&(x)=\frac{1}{2\pi}e^{i\pi\frac{1}{4n+2}}\int_0^{+\infty}\exp\left\{rxe^{i\pi\frac{n+1}{2n+1}}-\frac{r^{2n+1}}{2n+1}\right\}dr\nonumber\\
&\hspace{8mm}-\frac{1}{2\pi}e^{i\pi\frac{4n+1}{4n+2}}\int_0^{+\infty}\exp\left\{rxe^{i\pi\frac{3n+1}{2n+1}}-\frac{r^{2n+1}}{2n+1}\right\}dr\nonumber\\
=&\frac{1}{2\pi}e^{i\pi\frac{1}{4n+2}}\int_0^{+\infty}\exp\left\{rxe^{i\pi\frac{n+1}{2n+1}}-\frac{r^{2n+1}}{2n+1}\right\}dr\nonumber\\
\;&+\frac{1}{2\pi}e^{-i\pi\frac{1}{4n+2}}\int_0^{+\infty}\exp\left\{rxe^{-i\pi\frac{n+1}{2n+1}}-\frac{r^{2n+1}}{2n+1}\right\}dr.\label{domconv}\end{align}
\noindent The exponential functions $f(r)=\exp\left\{rxe^{\pm i\pi\frac{n+1}{2n+1}}\right\}$ in formula (\ref{domconv}) can be substituted by their Taylor expansions and the order of integration and summation can then be interchanged in force of the dominated convergence theorem. In order to apply the dominated convergence theorem, observe that $f(r)=\lim_{n\to+\infty}f_n(r)$ where $f_n(r)$ represents the sum of the first $n$ terms of the Taylor expansion of $f(r)$.  Since $\lvert f_n(r)\lvert \le e^{\lvert rx\lvert }$ for all $n$ and $e^{\lvert rx\lvert }$ is integrable on $(0,+\infty)$ with respect to the measure $\mu(dr)=e^{-\frac{r^{2n+1}}{2n+1}}dr$, the dominated convergence theorem can be applied. Thus, we can write
\begin{align}\text{Ai}_{2n+1}(x)=&\frac{1}{2\pi}e^{i\pi\frac{1}{4n+2}}\sum_{k=0}^{\infty}\frac{x^ke^{ik\pi\frac{n+1}{2n+1}}}{k!}\int_0^{+\infty}r^k\exp\left\{-\frac{r^{2n+1}}{2n+1}\right\}dr\nonumber\\
\;&+\frac{1}{2\pi}e^{-i\pi\frac{1}{4n+2}}\sum_{k=0}^{\infty}\frac{x^ke^{-ik\pi\frac{n+1}{2n+1}}}{k!}\int_0^{+\infty}r^k\exp\left\{-\frac{r^{2n+1}}{2n+1}\right\}dr\nonumber\\
=&\frac{1}{\pi(2n+1)^{\frac{2n}{2n+1}}}\sum_{k=0}^{\infty}\frac{e^{i\pi\frac{2k(n+1)+1}{4n+2}}+e^{-ik\pi\frac{2k(n+1)+1}{4n+2}}}{2}\;x^k(2n+1)^{\frac{k}{2n+1}}\frac{\Gamma\left(\frac{k+1}{2n+1}\right)}{k!}\nonumber\\
=&\frac{1}{\pi(2n+1)^{\frac{2n}{2n+1}}}\sum_{k=0}^{\infty}\frac{x^k(2n+1)^{\frac{k}{2n+1}}}{k!}\cos\left(\pi\frac{2k(n+1)+1}{4n+2}\right)\Gamma\left(\frac{k+1}{2n+1}\right)\nonumber\\
=&\frac{1}{\pi(2n+1)^{\frac{2n}{2n+1}}}\sum_{k=0}^{\infty}\frac{x^k(2n+1)^{\frac{k}{2n+1}}}{k!}\sin\left(\pi\frac{(k+1)(n+1)}{2n+1}\right)\Gamma\left(\frac{k+1}{2n+1}\right)\nonumber\end{align}
where in the last step we have used the formula $\cos \theta = \sin\left(\theta+\frac{\pi}{2}\right)$.
\end{proof}
\noindent We observe that formula (\ref{airyseries}) is consistent with the result obtained by Orsingher and D'Ovidio \cite{orsingherdovidio} who proved the following representation for the pseudo-density:
\begin{equation}\label{uodd}u_{2n+1}(x,t)=-\frac{1}{\pi x}\sum_{k=1}^{\infty}\frac{1}{k!}\sin\left(\frac{n\pi k}{2n+1}\right)\Gamma\left(1+\frac{k}{2n+1}\right)\left(-\frac{x}{t^{\frac{1}{2n+1}}}\right)^k.\end{equation}
\noindent The expression (\ref{uodd}) is immediately obtained by combining formulas (\ref{generalairy}) and (\ref{airyseries}). Moreover, by using the triplication formula for the gamma function
$$\Gamma(3z)=\frac{1}{2\pi}3^{3z-\frac{1}{2}}\Gamma\left(z\right)\Gamma\left(z+\frac{1}{3}\right)\Gamma\left(z+\frac{2}{3}\right)$$
with $z=\frac{k+1}{3}$, formula (\ref{airyseries}) reduces to (\ref{classicalairypowerseries}) for $n=1$.\\
Orsingher and D'Ovidio \cite{orsingherdovidio} discussed the behaviour of the function $u_{2n+1}(x,t)$ and they observed that, while for $n=1$ the pseudo-density is non-negative for $x>0$, the non-negativity on the positive semi-axis is lost for $n>1$ due to the oscillating behaviour of the function. They also pointed out that the asymmetry of $u_{2n+1}(x,t)$ seems to reduce as $n$ increases. This assertion can be supported by observing that
\begin{align}\lim_{n\to+\infty}u_{2n+1}(x,t)=&-\frac{1}{\pi x}\sum_{k=1}^{+\infty}\frac{(-x)^k}{k!}\sin\left(k\frac{\pi}{2}\right)\nonumber\\
=&-\frac{1}{2\pi ix}\sum_{k=0}^{+\infty}\frac{(-x)^k}{k!}\left(e^{ik\frac{\pi}{2}}-e^{-ik\frac{\pi}{2}}\right)\nonumber\\
=&-\frac{1}{2\pi ix}\left(e^{-ix}-e^{ix}\right)=\frac{\sin x}{\pi x}.\nonumber
\end{align}


\section{Pseudo-processes time-changed with stable subordinators}
\noindent  In this section we study the pseudo-process $X_{2n+1}(t)$, governed by the higher-order equation (\ref{oddorder}), time-changed with stable subordinators. In particular, we consider stable subordinators $S_{\theta}(t)$ having characteristic function
$$\mathbb{E}\left[e^{i\gamma S_{\theta}(t)}\right]=e^{-t\lvert \gamma\lvert^{\theta}e^{-i\frac{\theta\pi}{2}\sgn{\gamma}}}.$$
\noindent In the proof of the following theorem we need, as a preliminary result, the Mellin transform of the Wright function (see Prudnikov et al. \cite{prudnikov}, pag. 355)
\begin{equation*}\int_0^{+\infty}\text{W}_{a,b}(-x)\;x^{\eta-1}dx=\frac{\Gamma(\eta)}{\Gamma(b-a\eta)},\qquad\eta>0\end{equation*}
\noindent where 
\begin{equation*}\text{W}_{a,b}(z)=\sum_{k=0}^{\infty}\frac{z^k}{k!\;\Gamma(ak+b)},\qquad z\in\mathbb{C},\;a>-1,\;b\in\mathbb{C}.\end{equation*}

\begin{theorem} \label{gennu}Let $X_{2n+1}(t)$ be the pseudo-process governed by equation (\ref{oddorder}) and $S_{\theta}(t)$ a stable subordinator of exponent $\theta$, $0<\theta<1$, independent of $X_{2n+1}(t)$. For $\theta(2n+1)>1$ the following formula holds:
\begin{equation}\label{probext}\mathbb{P}(X_{2n+1}(S_{\theta}(t))\in dx)/dx=\frac{1}{\pi x}\mathbb{E}\left[e^{-b_n x\;G_{\theta(2n+1)}(1/t)}\sin\left(a_n x\;G_{\theta(2n+1)}(1/t)\right)\right]\end{equation}\end{theorem}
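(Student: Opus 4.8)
The plan is to obtain the pseudo-density of $X_{2n+1}(S_\theta(t))$ by integrating the pseudo-density $u_{2n+1}(x,s)$ of (\ref{oddorder}) against the law $h_\theta(s,t)$ of the subordinator, and then to recognise the outcome as the expectation in (\ref{probext}). First I would write
\[\mathbb{P}(X_{2n+1}(S_\theta(t))\in dx)/dx=\int_0^{+\infty}u_{2n+1}(x,s)\,h_\theta(s,t)\,ds\]
and substitute the series representation (\ref{uodd}), in which the dependence on $s$ appears only through the powers $s^{-k/(2n+1)}$. Interchanging summation and integration reduces everything to the negative fractional moments $\mathbb{E}[S_\theta(t)^{-k/(2n+1)}]=\int_0^{+\infty}s^{-k/(2n+1)}h_\theta(s,t)\,ds$.

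These moments are precisely what the preliminary Mellin-transform identity delivers: since the stable density $h_\theta(s,t)$ is a Wright function, combining $\int_0^{+\infty}\mathrm{W}_{a,b}(-x)\,x^{\eta-1}dx=\Gamma(\eta)/\Gamma(b-a\eta)$ with the self-similarity $S_\theta(t)\overset{d}{=}t^{1/\theta}S_\theta(1)$ should give
\[\mathbb{E}\big[S_\theta(t)^{-k/(2n+1)}\big]=t^{-\frac{k}{\theta(2n+1)}}\,\frac{\Gamma\!\big(1+\tfrac{k}{\theta(2n+1)}\big)}{\Gamma\!\big(1+\tfrac{k}{2n+1}\big)}.\]
Inserting this into the integrated series, the factor $\Gamma(1+\tfrac{k}{2n+1})$ supplied by (\ref{uodd}) cancels against the denominator, and I am left with a power series identical in shape to (\ref{uodd}) but with $2n+1$ replaced by $\beta:=\theta(2n+1)$ inside the gamma factor and in the power of $t$, the coefficients $\sin(n\pi k/(2n+1))$ being unchanged.

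It then remains to identify this series with the right-hand side of (\ref{probext}). I would expand the damped oscillation as $e^{-b_n x G_\beta(1/t)}\sin(a_n xG_\beta(1/t))=\mathrm{Im}\,\exp\{i(a_n+ib_n)xG_\beta(1/t)\}$, exchange expectation and Taylor summation, and use $a_n+ib_n=e^{i\pi/(2(2n+1))}$ together with $\mathbb{E}[G_\beta(1/t)^k]=t^{-k/\beta}\Gamma(1+k/\beta)$. This reproduces exactly the previous series once one applies the elementary identity $\sin\!\big(\tfrac{(n+1)\pi k}{2n+1}\big)=-(-1)^k\sin\!\big(\tfrac{n\pi k}{2n+1}\big)$, obtained by writing $n+1=(2n+1)-n$ and expanding the sine. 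Comparing the two series termwise closes the argument; the calculation parallels, in reverse, the passage from the representation (\ref{intro2}) to the series (\ref{uodd}), now carried out with $\beta$ in the role of $2n+1$.

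The hard part, and the only place where the hypothesis $\theta(2n+1)>1$ is genuinely needed, is the convergence bookkeeping behind these interchanges. The integrated series has general term of size $\tfrac{|x|^k}{k!}\Gamma(1+k/\beta)\,t^{-k/\beta}$, and a ratio-test estimate (using $\Gamma(z+1/\beta)/\Gamma(z)\sim z^{1/\beta}$) shows it converges for every $x$ exactly when $\beta>1$. Equivalently, the term-by-term integration of the expectation is licensed by dominated convergence only for $\beta>1$, because the dominating function $e^{|x|y}$ (note $|a_n+ib_n|=1$) is integrable against the generalized gamma density $\beta t\,y^{\beta-1}e^{-ty^\beta}\,dy$ precisely in that range. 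I would therefore isolate this integrability estimate at the start and use it to justify both Fubini interchanges, treating $x>0$ first and recovering the statement for general $x$ by the parity of the representation.
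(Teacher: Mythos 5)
Your proposal is correct and follows essentially the same route as the paper: integrate the series (\ref{uodd}) against the Wright-function form of the subordinator density, evaluate the negative fractional moments via the Mellin transform of $\text{W}_{-\theta,1-\theta}$, and identify the resulting series (with $\Gamma(1+\tfrac{k}{2n+1})$ replaced by $\Gamma(1+\tfrac{k}{\theta(2n+1)})$) with the expectation in (\ref{probext}), with the condition $\theta(2n+1)>1$ entering exactly where you place it. The only differences are cosmetic: the paper runs the final identification in the opposite direction, reinserting the gamma factor as an integral and resumming via $e^{x\cos\phi}\sin(x\sin\phi)=\sum_k\sin(\phi k)x^k/k!$, whereas you expand the expectation into its moment series and match termwise; also, your closing appeal to ``parity'' in $x$ is unnecessary (neither side is even or odd in $x$), since the termwise comparison is already valid for every real $x$.
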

\noindent where $G_{\gamma}(\tau)$ is a random variable with generalized gamma distribution having probability density function
$$g_{\gamma}(y;\tau)=\gamma\frac{y^{\gamma-1}}{\tau}\exp\left(-\frac{y^{\gamma}}{\tau}\right),\qquad y,\gamma,\tau>0$$
and $$a_n=\cos\frac{\pi}{2(2n+1)},\qquad b_n=\sin\frac{\pi}{2(2n+1)}.$$
\begin{proof}
We use the Wright function representation of the probability density function of the stable subordinator
\begin{align}h_{\theta}(x,t)=&\frac{\theta t}{x^{\theta+1}}W_{-\theta,1-\theta}\left(-\frac{t}{x^{\theta}}\right)\nonumber\\
=&\frac{\theta}{x}\sum_{k=0}^{\infty}\frac{(-1)^kt^{k+1}}{x^{\theta(k+1)}k!\;\Gamma(-\theta(k+1)+1)}.\end{align}
Thus we have
\begin{align}\mathbb{P}&(X_{2n+1}(S_{\theta}(t))\in dx)/dx=\int_{0}^{+\infty}u_{2n+1}(x,s)\;h_{\theta}(s,t)\;ds\nonumber\\
=&-\frac{\theta t}{\pi x}\sum_{k=1}^{\infty}\frac{\left(-x\right)^k}{k!}\sin\left(\frac{n\pi k}{2n+1}\right)\Gamma\left(1+\frac{k}{2n+1}\right)\int_0^{+\infty}s^{-\frac{k}{2n+1}-\theta-1}\text{W}_{-\theta,1-\theta}\left(-\frac{t}{s^\theta}\right)ds\nonumber\end{align}
\noindent Observe that, in the last step, we have used the power series representation (\ref{uodd}) for $u_{2n+1}(x,s)$ and we have interchanged the order of summation and integration by the dominated convergence theorem. We note that $u_{2n+1}(x,s)=\lim_{m\to+\infty}f_m(x,s)$ where $f_m(x,s)$ represents the sum of the first $m$ terms of the power series (\ref{uodd}). Of course $\lvert f_m(x,s)\lvert \le g(x,s)$ for all $m$, where
$g(x,s)=\frac{1}{\pi x}\sum_{k=1}^{\infty}\frac{1}{k!}\Gamma\left(1+\frac{k}{2n+1}\right)\left(\frac{\lvert x\lvert }{s^{\frac{1}{2n+1}}}\right)^k$. By applying the monotone convergence theorem and integrating termwise, it can be shown that $\int_{0}^{+\infty}g(x,s)\;h_{\theta}(s,t)\;ds<+\infty$ for $\theta(2n+1)>1$. Thus, the dominated convergence theorem can be applied and we can write
\begin{align}\mathbb{P}&(X_{2n+1}(S_{\theta}(t))\in dx)/dx\nonumber\\
=&-\frac{1}{\pi x}\sum_{k=1}^{\infty}\frac{1}{k!}\sin\left(\frac{n\pi k}{2n+1}\right)\Gamma\left(1+\frac{k}{\theta(2n+1)}\right)\left(-\frac{x}{t^{\frac{1}{\theta(2n+1)}}}\right)^k\nonumber\\
=&-\frac{(2n+1)t}{\pi x}\int_0^{+\infty}s^{2n}e^{-ts^{2n+1}}\sum_{k=1}^{\infty}\frac{1}{k!}\sin\left(\frac{n\pi k}{2n+1}\right)\left(-xs^{\frac{1}{\theta}}\right)^k\;ds.\nonumber
\end{align}
\noindent By using the relationship 
\begin{equation}\label{sineseries}e^{x\cos\phi}\sin(x\sin\phi)=\sum_{k=0}^{\infty}\sin(\phi k)\frac{x^k}{k!}\end{equation}
\noindent we finally obtain
\begin{align}\mathbb{P}(X_{2n+1}(&S_{\theta}(t))\in dx)/dx\nonumber\\
=&\frac{(2n+1)t}{\pi x}\int_0^{+\infty}e^{-xs^{\frac{1}{\theta}}\cos\frac{n\pi}{2n+1}}\sin\left(xs^{\frac{1}{\theta}}\sin\frac{n\pi}{2n+1}\right)s^{2n}e^{-ts^{2n+1}}ds\nonumber\\
=&\frac{(2n+1)t}{\pi x}\int_0^{+\infty}e^{-xs^{\frac{1}{\nu}}\sin\frac{\pi}{2(2n+1)}}\sin\left(xs^{\frac{1}{\theta}}\cos\frac{\pi}{2(2n+1)}\right)s^{2n}e^{-ts^{2n+1}}ds.\nonumber\end{align}
\noindent The change of variables $y=s^{\frac{1}{\theta}}$ completes the proof.
\end{proof}

\noindent Formula (\ref{probext}) provides a probabilistic representation for the pseudo-density of the subordinated pseudo-process $X_{2n+1}(S_{\theta}(t))$ in terms of an expected value of damped oscillations with generalized gamma distributed parameters. Theorem \ref{gennu} is an extension of the probabilistic representation (\ref{intro2}) for the odd-order Airy function obtained by Orsingher and D'Ovidio \cite{orsingherdovidio}. As a corollary of our result we can write
\begin{equation}\label{coroll}\mathbb{P}(X_{2n+1}(S_{\theta}(t))\in dx)/dx=-\frac{1}{\pi x}\sum_{k=1}^{\infty}\frac{1}{k!}\sin\left(\frac{n\pi k}{2n+1}\right)\Gamma\left(1+\frac{k}{\theta(2n+1)}\right)\left(-\frac{x}{t^{\frac{1}{\theta(2n+1)}}}\right)^k.\end{equation}
The condition $\theta(2n+1)>1$ imposed in theorem \ref{gennu} ensures that the power series (\ref{coroll}) has infinite radius of convergence. This can be proved by using the Stirling approximation formula for the gamma function.\\
\noindent In the next section we extend formula (\ref{coroll}) to non-integer values of $n$ by introducing a suitable fractional generalization of the pseudo-process $X_{2n+1}(t)$. Moreover, we show that the pseudo-density (\ref{coroll}) and its fractional extension are genuine non-negative probability density functions if the parameters are chosen in a suitable way.

\section{Fractional Airy functions and stable processes}
\noindent In this section we generalize the results obtained so far by studying a family of fractional-order pseudo-processes. In particular, we are interested in the solution to the fractional partial differential equation
\begin{equation}\label{RFeq}
\begin{dcases}
\frac{\partial u}{\partial t}(x,t)={}_{x}D^{\alpha}_{\theta}u(x,t),\qquad \alpha>1,\;\theta\in(0,1]\\
u(x,0)=\delta(x)
\end{dcases}\end{equation}
where ${}_{x}D^{\alpha}_{\theta}$ represents the Riesz-Feller fractional derivative (originally defined by Feller \cite{feller}) for which we modify the usual restrictions $0<\alpha\le2$ and $\lvert \theta\lvert \le\min\{\alpha,2-\alpha\}$. As pointed out by Mainardi \cite{mainardi}, these restrictions are usually imposed in order to ensure the probabilistic interpretability of the Riesz-Feller operator. For the restricted parameters, the solution to equation (\ref{RFeq}) is indeed the probability density function of an asymmetric stable process. However, by eliminating the upper bound on $\alpha$, we are able to obtain an explicit series representation for the density function of asymmetric stable processes of exponent $\nu>1$ and skewness parameter $\beta$, with $0<\lvert \beta\lvert <1$.\\
\noindent By using the notation $$\mathcal{F}\{f(x)\}(\gamma)=\int_{-\infty}^{+\infty}e^{i\gamma x}f(x)dx$$
\noindent the Riesz-Feller fractional operator can be defined implicitely by means of its Fourier transform
\begin{equation}\label{RFfouriertr}\mathcal{F}\{{}_{x}D^{\alpha}_{\theta}f(x)\}(\gamma)=-\lvert \gamma\lvert^{\alpha}e^{\frac{i\pi\theta}{2}\sgn(\gamma)}\mathcal{F}\{f(x)\}(\gamma).\end{equation}

\noindent For functions $f:\mathbb{R}\to\mathbb{R}$, $f\in\mathcal{C}^m(\mathbb{R})$, with derivatives decaying for $\lvert x\lvert \to+\infty$, the Riesz-Feller derivative of order $\alpha$ admits, for all values of $\theta$, the explicit integral representation
\begin{align}{}_{x}D^{\alpha}_{\theta}f(x)=\frac{\Gamma(\alpha-m+1)}{\pi}\frac{d^m}{dx^m}\left[\sin\frac{\pi(\alpha+\theta)}{2}\int_0^{+\infty}\frac{f(x+z)}{z^{\alpha-m+1}}dz\right.\nonumber\\
\left.+(-1)^m\sin\frac{\pi(\alpha-\theta)}{2}\int_0^{+\infty}\frac{f(x-z)}{z^{\alpha-m+1}}dz\right]\label{explicitRiesz}\end{align}
\noindent with $m-1<\alpha<m$, $m\in\mathbb{N}$.\\
\noindent The integral representation (\ref{explicitRiesz}) can be proved by checking that its Fourier transform coincides with the expression (\ref{RFfouriertr}) (see Orsingher and Toaldo \cite{orsinghertoaldo}).\\

\noindent We start our analysis by studying equation (\ref{RFeq}) for $\theta=1$:
\begin{equation}\label{RFtheta1}
\begin{dcases}
\frac{\partial u}{\partial t}(x,t)={}_{x}D^{\alpha}_{1}u(x,t),\qquad \alpha>1\\
u(x,0)=\delta(x).
\end{dcases}\end{equation}
Denoting by $u_{\alpha}(x,t)$ the solution to equation (\ref{RFtheta1}), its Fourier transform with respect to $x$ is
\begin{equation}\label{fractionalcharfun}\mathcal{F}\{u_{\alpha}(x,t)\}(\gamma,t)=e^{-it\sgn(\gamma)\lvert \gamma\lvert^{\alpha}}\end{equation}
from which we obtain
\begin{align}u_{\alpha}(x,t)=&\frac{1}{2\pi}\int_{-\infty}^{+\infty}e^{-i\gamma x-it\sgn(\gamma)\lvert \gamma\lvert^{\alpha}}d\gamma\nonumber\\
=&\frac{1}{\pi}\int_0^{+\infty}\cos\left(\gamma x+t\lvert \gamma\lvert^{\alpha}\right)d\gamma.\label{ualpha}\end{align}
In analogy with formula (\ref{generalairy}), by defining the generalized Airy function
\begin{equation}\label{definefractionalairy}\normalfont{\text{Ai}}_{\alpha}(x)=\frac{1}{\pi}\int_0^{+\infty}\cos\left(xs+\frac{s^{\alpha}}{\alpha}\right)ds,\qquad \alpha>1\end{equation}
we express the pseudo-density (\ref{ualpha}) in the form
\begin{equation}\label{fractionaldensity}u_{\alpha}(x,t)=\frac{1}{(\alpha t)^{\frac{1}{\alpha}}}\normalfont{\text{Ai}}_{\alpha}\left(\frac{x}{(\alpha t)^{\frac{1}{\alpha}}}\right).\end{equation}
The first problem we tackle is the convergence of the integral defining the generalized Airy function (\ref{definefractionalairy}).

\begin{theorem}\label{improperthm}For $\alpha>1$, the improper integral defining the generalized Airy function (\ref{definefractionalairy}) is convergent $\forall x\in\mathbb{R}$.\end{theorem}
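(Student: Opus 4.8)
The plan is to exploit the fact that, although the integrand $\cos\!\left(xs+\frac{s^{\alpha}}{\alpha}\right)$ does not decay as $s\to+\infty$, the phase $\varphi(s)=xs+\frac{s^{\alpha}}{\alpha}$ oscillates ever more rapidly, since its derivative $\varphi'(s)=x+s^{\alpha-1}$ diverges to $+\infty$ as $s\to+\infty$ precisely because $\alpha>1$. I would therefore establish convergence of the improper integral (\ref{definefractionalairy}) through the Cauchy criterion, showing that the tails $\int_A^B\cos\varphi(s)\,ds$ can be made arbitrarily small for $B>A$ large, the essential tool being integration by parts in the form of a first-derivative (van der Corput-type) estimate.

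First I would fix $x\in\mathbb{R}$ and choose $M>0$ large enough that $\varphi'(s)\ge\frac12 s^{\alpha-1}>0$ for all $s\ge M$; this is possible because $s^{\alpha-1}\to+\infty$, and when $x<0$ this choice also clears the unique zero of $\varphi'$, located at $s_0=|x|^{1/(\alpha-1)}$. Over the bounded interval $[0,M]$ the integrand is continuous and bounded, so $\int_0^M\cos\varphi(s)\,ds$ is finite and poses no difficulty. On the tail $[M,+\infty)$, where $\varphi'$ is positive and increasing, I would write $\cos\varphi=\frac{1}{\varphi'}\frac{d}{ds}\sin\varphi$ and integrate by parts, obtaining for $B>A\ge M$
\begin{equation*}\int_A^B\cos\varphi(s)\,ds=\left[\frac{\sin\varphi(s)}{\varphi'(s)}\right]_A^B+\int_A^B\sin\varphi(s)\,\frac{\varphi''(s)}{\varphi'(s)^2}\,ds,\end{equation*}
where $\varphi''(s)=(\alpha-1)s^{\alpha-2}$.

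It then remains to control the two pieces. The boundary term is bounded by $1/\varphi'(s)\to0$ as $s\to+\infty$, hence it vanishes in the limit. For the remaining integral I would use the absolute bound $\left|\sin\varphi(s)\,\frac{\varphi''(s)}{\varphi'(s)^2}\right|\le\frac{(\alpha-1)s^{\alpha-2}}{\frac14 s^{2(\alpha-1)}}=4(\alpha-1)\,s^{-\alpha}$, valid for $s\ge M$; since $\alpha>1$, the function $s^{-\alpha}$ is integrable on $[M,+\infty)$, so the tail integral converges absolutely. Combining the two estimates via the Cauchy criterion yields convergence of $\int_M^{+\infty}\cos\varphi(s)\,ds$, and adding the finite contribution of $[0,M]$ gives the claim for every $x\in\mathbb{R}$.

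I expect the only genuine subtlety to be the non-vanishing of $\varphi'$ required by the integration by parts, which fails for $x<0$ at the single point $s_0$ and is handled by cutting the domain at $M>s_0$. The decisive quantitative observation is that the exponent of $s$ appearing in $\varphi''/(\varphi')^2$ is exactly $(\alpha-2)-2(\alpha-1)=-\alpha$, so that the hypothesis $\alpha>1$ enters twice — once to force $\varphi'\to+\infty$ and once to render $s^{-\alpha}$ integrable — and the main care is simply in getting this bookkeeping of exponents right.
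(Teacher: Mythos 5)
Your proof is correct, and it rests on the same analytic fact as the paper's argument --- that the phase derivative $\varphi'(s)=x+s^{\alpha-1}$ tends to $+\infty$ precisely because $\alpha>1$ --- but it executes the idea by a genuinely different route. The paper substitutes $u=\varphi(s)$ on a half-line $[s_0,+\infty)$ where $\varphi$ is increasing, rewrites the tail as $\int \cos(u)\,\bigl(x+s(u)^{\alpha-1}\bigr)^{-1}du$, and invokes the Dirichlet test, using only that the second factor decreases monotonically to $0$; no quantitative estimate is needed. You instead integrate by parts directly in the variable $s$ and show that the remainder $\int \sin\varphi\,\varphi''/(\varphi')^{2}\,ds$ converges absolutely, the exponent bookkeeping $(\alpha-2)-2(\alpha-1)=-\alpha$ being exactly right. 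What your version buys is an explicit rate: the tail $\int_A^{+\infty}\cos\varphi(s)\,ds$ is $O(A^{1-\alpha})$, which the Dirichlet-test route does not deliver. The price is the extra care you correctly take in choosing $M$ beyond the zero $s_0=\lvert x\rvert^{1/(\alpha-1)}$ of $\varphi'$ when $x<0$ and in verifying $\varphi'(s)\ge \tfrac12 s^{\alpha-1}$ there; the paper absorbs the same issue into its choice of $s_0$. Both arguments are complete, and I see no gap in yours.
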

\begin{proof}For any $x\in\mathbb{R}$ there exists $s_0$ such that the function $$u(s)=xs+\frac{s^{\alpha}}{\alpha}$$
\noindent  is increasing for $s\ge s_0$. Denote by $s(u)$ the inverse of $u(s)$ for $s>s_0$. For $c>s_0$ we have that
\begin{align}\int_{s_0}^c \cos(u(s))ds=\int_{u(s_0)}^{u(c)}\frac{\cos(u)}{x+s(u)^{\alpha-1}}du.\label{airyconv}\end{align}
\noindent Since $\lim_{u\to+\infty}s(u)=+\infty$ and $\alpha>1$, for any $x\in\mathbb{R}$ the function $u\mapsto \frac{1}{x+s(u)^{\alpha-1}}$ converges to 0 for $u\to+\infty$. Thus, by taking the limit for $c\to+\infty$ of the integrals in formula (\ref{airyconv}), the Dirichlet test implies the convergence of the integral
$$\int_{s_0}^{+\infty}\cos(u(s))ds.$$\noindent 
The convergence of the integral in formula (\ref{definefractionalairy}) follows immediately.\end{proof}

\noindent In the following theorem we obtain a power series representation for the generalized Airy function.
\begin{theorem}\label{gafthm}The generalized Airy function $$\normalfont{\text{Ai}}_{\alpha}(x)=\frac{1}{\pi}\int_0^{+\infty}\cos\left(xs+\frac{s^\alpha}{\alpha}\right)ds,\qquad\alpha>1$$ admits the power series representation
\begin{equation}\normalfont{\text{Ai}}_{\alpha}(x)=\frac{1}{\pi\alpha^{\frac{\alpha-1}{\alpha}}}\sum_{k=0}^{\infty}\frac{x^k\alpha^{\frac{k}{\alpha}}}{k!}\Gamma\left(\frac{k+1}{\alpha}\right)\sin\left(\pi\frac{(k+1)(\alpha+1)}{2\alpha}\right).\end{equation}\end{theorem}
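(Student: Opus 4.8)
The plan is to follow the contour-rotation strategy used in the proof of Theorem \ref{airythm}, adapting it to a non-integer exponent. First I would write
\begin{equation*}
\text{Ai}_{\alpha}(x)=\frac{1}{2\pi}\int_0^{+\infty}\left[e^{i\left(xs+\frac{s^{\alpha}}{\alpha}\right)}+e^{-i\left(xs+\frac{s^{\alpha}}{\alpha}\right)}\right]ds,
\end{equation*}
keeping the half-line $(0,+\infty)$ rather than passing to the full real line, since for non-integer $\alpha$ the power $s^{\alpha}$ is only defined through its principal branch (cut along the negative real axis) and is not odd. The two terms are complex conjugates, so it suffices to analyse the first one and then take twice the real part.

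Next I would rotate the ray of integration of the first term from the positive real axis to the ray $\arg s=\frac{\pi}{2\alpha}$. Since $\frac{\pi}{2\alpha}<\frac{\pi}{2}$ for $\alpha>1$, the closed sector $0\le\arg s\le\frac{\pi}{2\alpha}$ avoids the branch cut and the integrand is analytic there, so Cauchy's theorem applies once the contribution of the circular arc is shown to vanish. On the rotated ray, writing $s=re^{i\pi/(2\alpha)}$, one has $i s^{\alpha}/\alpha=-r^{\alpha}/\alpha$, so the oscillatory factor is turned into the damping $e^{-r^{\alpha}/\alpha}$ and the integral becomes absolutely convergent:
\begin{equation*}
\int_0^{+\infty}e^{i\left(xs+\frac{s^{\alpha}}{\alpha}\right)}ds=e^{i\frac{\pi}{2\alpha}}\int_0^{+\infty}e^{ixre^{i\frac{\pi}{2\alpha}}}e^{-\frac{r^{\alpha}}{\alpha}}dr.
\end{equation*}

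The arc estimate is the step I expect to be the main obstacle. On the arc $s=Re^{i\phi}$, $0\le\phi\le\frac{\pi}{2\alpha}$, the modulus of the integrand equals $e^{-R^{\alpha}\sin(\alpha\phi)/\alpha}\,e^{-Rx\sin\phi}$, and the first factor provides no decay near $\phi=0$, exactly where the $e^{isx}$ oscillation lives. Using the Jordan-type inequality $\sin(\alpha\phi)\ge\frac{2\alpha\phi}{\pi}$ on $[0,\frac{\pi}{2\alpha}]$, the arc contribution is bounded by a constant times $R\int_0^{\pi/(2\alpha)}e^{-2R^{\alpha}\phi/\pi+R\lvert x\lvert\phi}\,d\phi$, which tends to $0$ as $R\to+\infty$ precisely because $\alpha>1$ forces the term $R^{\alpha}$ to dominate the linear term $R\lvert x\lvert$; this disposes of the arc for every $x\in\mathbb{R}$.

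Once the rotation is justified, the remainder parallels Theorem \ref{airythm}. I would expand $e^{ixre^{\pm i\pi/(2\alpha)}}$ in its Taylor series and interchange summation and integration by dominated convergence, the partial sums being bounded by $e^{\lvert xr\lvert}$, which is integrable against the measure $e^{-r^{\alpha}/\alpha}dr$ when $\alpha>1$. The moment integrals are evaluated by the substitution $u=r^{\alpha}/\alpha$, giving $\int_0^{+\infty}r^k e^{-r^{\alpha}/\alpha}\,dr=\alpha^{\frac{k+1}{\alpha}-1}\Gamma\!\left(\frac{k+1}{\alpha}\right)$, which already produces the prefactor $\alpha^{-\frac{\alpha-1}{\alpha}}$ and the factor $\alpha^{k/\alpha}$ of the claimed series. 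Finally, collecting the phase $e^{i\pi/(2\alpha)}\cdot i^k e^{ik\pi/(2\alpha)}=e^{i\pi(k+1)(\alpha+1)/(2\alpha)}e^{-i\pi/2}$ and adding the conjugate term, the real part turns the resulting cosine into $\sin\!\left(\pi\frac{(k+1)(\alpha+1)}{2\alpha}\right)$ through $\cos(\theta-\tfrac{\pi}{2})=\sin\theta$, yielding exactly the stated power series representation.
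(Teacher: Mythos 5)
Your proposal is correct and follows essentially the same route as the paper: the same splitting of the cosine into two conjugate exponentials over the half-line, the same rotation onto the rays $\arg s=\pm\frac{\pi}{2\alpha}$ justified by Cauchy's theorem, the same termwise expansion with the moment integrals $\int_0^{+\infty}r^k e^{-r^{\alpha}/\alpha}dr=\alpha^{\frac{k+1}{\alpha}-1}\Gamma\left(\frac{k+1}{\alpha}\right)$, and the same phase bookkeeping leading to the sine factor. Your Jordan-type bound on the outer arc is in fact more explicit than what the paper records (the paper merely asserts the arc contributions vanish), and the only detail you gloss over is the small indentation around the origin needed because $s^{\alpha}$ has a branch point there, which the paper handles with the inner radius $\varepsilon$ of its annular sectors.
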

\begin{proof}
We start by observing that
\begin{equation}\normalfont{\text{Ai}}_{\alpha}(x)=
\frac{1}{2\pi}\left(\int_{0}^{+\infty}\exp\left\{itx+i\frac{t^{\alpha}}{\alpha}\right\}dt+\int_{0}^{+\infty}\exp\left\{-itx-i\frac{t^{\alpha}}{\alpha}\right\}dt\right).\label{SgnStep}\end{equation}
\noindent By setting $$f_1(t)=\exp\left\{itx+i\frac{t^{\alpha}}{\alpha}\right\},\qquad f_2(t)=\exp\left\{-itx-i\frac{t^{\alpha}}{\alpha}\right\}$$
\noindent the expression (\ref{SgnStep}) can be reformulated by integrating $f_1$ and $f_2$ along two suitable contours encircling two different sectors of a circular annulus with inner radius $\varepsilon$ and outer radius $R$. The contours are described in detail in figure \ref{fig:contour2}.\\

\begin{figure}[b]
\centering
\includegraphics[scale=0.9]{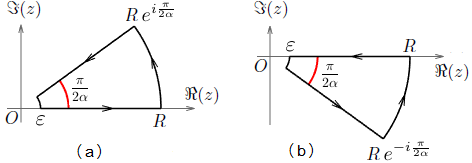}

\caption{the functions $f_1$ and $f_2$ are integrated respectively along the contours $(a)$ and $(b)$. Both contours enclose sectors of a circular annulus with inner radius $\varepsilon$ and outer radius $R$. The contour $(a)$ has arcs with angle ranging from $0$ to $\frac{\pi}{2\alpha}$, while the arcs of the contour $(b)$ have angle ranging from $-\frac{\pi}{2\alpha}$ to $0$. The integrals along the arcs converge to 0 as $\varepsilon\to0$ and $R\to+\infty$.}
\label{fig:contour2}
\end{figure}

\noindent By applying the Cauchy integral theorem and by taking the limits for $\varepsilon\to0$ and $R\to+\infty$, we obtain
\begin{align}\normalfont{\text{Ai}}_{\alpha}(x)&=\frac{1}{2\pi}e^{i\frac{\pi}{2\alpha}}\int_{0}^{+\infty}\exp\left\{ixre^{i\frac{\pi}{2\alpha}}-\frac{r^{\alpha}}{\alpha}\right\}dr\nonumber\\
&\qquad+\frac{1}{2\pi}e^{-i\frac{\pi}{2\alpha}}\int_{0}^{+\infty}\exp\left\{-ixre^{-i\frac{\pi}{2\alpha}}-\frac{r^{\alpha}}{\alpha}\right\}dr\nonumber\\
&=\frac{1}{2\pi}e^{i\frac{\pi}{2\alpha}}\sum_{k=0}^{\infty}\frac{x^ke^{ik\pi\frac{\alpha+1}{2\alpha}}}{k!}\int_{0}^{+\infty}r^ke^{-\frac{r^{\alpha}}{\alpha}}dr\nonumber\\
&\qquad+\frac{1}{2\pi}e^{-i\frac{\pi}{2\alpha}}\sum_{k=0}^{\infty}\frac{x^ke^{-ik\pi\frac{\alpha+1}{2\alpha}}}{k!}\int_{0}^{+\infty}r^ke^{-\frac{r^{\alpha}}{\alpha}}dr\nonumber\\
&=\frac{1}{\pi\alpha^{\frac{\alpha-1}{\alpha}}}\sum_{k=0}^{\infty}\frac{e^{i\pi\frac{k(\alpha+1)+1}{2\alpha}}+e^{-i\pi\frac{k(\alpha+1)+1}{2\alpha}}}{2}\frac{x^k\alpha^{\frac{k}{\alpha}}}{k!}\Gamma\left(\frac{k+1}{\alpha}\right)\nonumber\\
&=\frac{1}{\pi\alpha^{\frac{\alpha-1}{\alpha}}}\sum_{k=0}^{\infty}\frac{x^k\alpha^{\frac{k}{\alpha}}}{k!}\Gamma\left(\frac{k+1}{\alpha}\right)\cos\left(\pi\frac{k(\alpha+1)+1}{2\alpha}\right)\nonumber\\
&=\frac{1}{\pi\alpha^{\frac{\alpha-1}{\alpha}}}\sum_{k=0}^{\infty}\frac{x^k\alpha^{\frac{k}{\alpha}}}{k!}\Gamma\left(\frac{k+1}{\alpha}\right)\sin\left(\pi\frac{(k+1)(\alpha+1)}{2\alpha}\right)\nonumber\end{align}
where in the last step we have used the formula $\cos \theta = \sin\left(\theta+\frac{\pi}{2}\right)$. The inversion of the order of summation and integration performed in the proof can be justified again as in theorem \ref{airythm}.
\end{proof}

\noindent By using formula (\ref{fractionaldensity}) and by applying theorem \ref{gafthm} we can now express the pseudo-density of the fractional order pseudo-process in the form
$$u_{\alpha}(x,t)=-\frac{1}{\pi x}\sum_{k=1}^{\infty}\left(-\frac{x}{t^{\frac{1}{\alpha}}}\right)^k\frac{\Gamma\left(1+\frac{k}{\alpha}\right)}{k!}\sin\left(k\pi\frac{\alpha-1}{2\alpha}\right).$$
\noindent Our last step is to time-change the fractional order pseudo-process by means of stable subordinators. Similarly to the the odd-order case, the following result holds.

\begin{theorem}\label{stablethm}Let $X_{\alpha}(t)$ be the pseudo-process governed by the fractional equation (\ref{RFtheta1}) and $S_{\theta}(t)$ be a stable subordinator of exponent $\theta$, $0<\theta<1$, independent of $X_{\alpha}(t)$. For $\alpha\theta>1$ the following formula holds:
\begin{equation}\mathbb{P}(X_{\alpha}(S_{\theta}(t))\in dx)/dx=\frac{1}{\pi x}\mathbb{E}\left[e^{-b_{\alpha} x\;G_{\alpha\theta}(1/t)}\sin\left(a_{\alpha} x\;G_{\alpha\theta}(1/t)\right)\right]\end{equation}\end{theorem}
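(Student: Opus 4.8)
The plan is to mirror the proof of Theorem \ref{gennu}, since the fractional pseudo-density $u_\alpha(x,t)$ enjoys exactly the same power series structure as $u_{2n+1}(x,t)$ once the integer exponent $2n+1$ is replaced by the real parameter $\alpha$ and one sets $a_\alpha=\cos\frac{\pi}{2\alpha}$, $b_\alpha=\sin\frac{\pi}{2\alpha}$. First I would condition on the subordinator and write
$$\mathbb{P}(X_\alpha(S_\theta(t))\in dx)/dx=\int_0^{+\infty}u_\alpha(x,s)\,h_\theta(s,t)\,ds,$$
inserting the Wright-function density $h_\theta(s,t)=\frac{\theta t}{s^{\theta+1}}W_{-\theta,1-\theta}(-t/s^\theta)$ together with the power series for $u_\alpha(x,s)$ established just before this theorem. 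As in Theorem \ref{gennu}, the termwise integration is then reduced to the Mellin transform of the Wright function.

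The central computation is the evaluation of $\int_0^{+\infty}s^{-k/\alpha-\theta-1}W_{-\theta,1-\theta}(-t/s^\theta)\,ds$. Substituting $w=t/s^\theta$ and applying the Mellin formula in the form $\int_0^{+\infty}W_{-\theta,1-\theta}(-w)\,w^{\eta-1}dw=\Gamma(\eta)/\Gamma(1-\theta+\theta\eta)$ with $\eta=1+k/(\alpha\theta)$ converts the factor $\Gamma(1+k/\alpha)$ carried by $u_\alpha$ into $\Gamma(1+k/(\alpha\theta))$ and rescales the time argument to $t^{-k/(\alpha\theta)}$. This produces the fractional analogue of (\ref{coroll}),
$$\mathbb{P}(X_\alpha(S_\theta(t))\in dx)/dx=-\frac{1}{\pi x}\sum_{k=1}^\infty\frac{1}{k!}\sin\!\left(k\pi\frac{\alpha-1}{2\alpha}\right)\Gamma\!\left(1+\frac{k}{\alpha\theta}\right)\left(-\frac{x}{t^{1/(\alpha\theta)}}\right)^k.$$

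To recover the claimed expectation I would recognise $\Gamma(1+k/(\alpha\theta))\,t^{-k/(\alpha\theta)}=\mathbb{E}[G_{\alpha\theta}(1/t)^k]$, which follows from a direct moment computation for the generalized gamma law, and interchange the sum with the expectation. Summing the resulting series via identity (\ref{sineseries}) with phase $\phi=\pi\frac{\alpha-1}{2\alpha}$, and using that $\cos\phi=\sin\frac{\pi}{2\alpha}=b_\alpha$ and $\sin\phi=\cos\frac{\pi}{2\alpha}=a_\alpha$, collapses the series into $-e^{-b_\alpha x G_{\alpha\theta}(1/t)}\sin(a_\alpha x G_{\alpha\theta}(1/t))$ under the expectation; the outer factor $-\frac{1}{\pi x}$ then yields the stated formula.

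The main obstacle, exactly as in Theorem \ref{gennu}, is justifying the interchange of summation and integration in the subordination integral. I would dominate the partial sums of the series for $u_\alpha(x,s)$ by $g(x,s)=\frac{1}{\pi x}\sum_{k\ge1}\frac{1}{k!}\Gamma(1+k/\alpha)(|x|/s^{1/\alpha})^k$ and check, through monotone convergence and the same Mellin computation, that $\int_0^{+\infty}g(x,s)\,h_\theta(s,t)\,ds<+\infty$. This finiteness is precisely where the hypothesis $\alpha\theta>1$ enters: by Stirling's approximation for $\Gamma(1+k/(\alpha\theta))$ it guarantees that the emerging power series has infinite radius of convergence, so that the dominating integral converges and every termwise manipulation above is legitimate.
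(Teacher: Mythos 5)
Your proposal is correct and follows essentially the same route as the paper's own proof: the subordination integral with the Wright-function density, termwise integration via the Mellin transform of $W_{-\theta,1-\theta}$ (justified by the same domination argument, with $\alpha\theta>1$ ensuring convergence), and collapse of the resulting series through identity (\ref{sineseries}) with $\phi=\pi\frac{\alpha-1}{2\alpha}$ — your direct identification of $\Gamma(1+k/(\alpha\theta))t^{-k/(\alpha\theta)}$ as the moments of $G_{\alpha\theta}(1/t)$ is just a cosmetic shortcut for the paper's integral representation of the gamma function followed by the substitution $y=s^{1/\theta}$. Note that your constants $a_\alpha=\cos\frac{\pi}{2\alpha}$, $b_\alpha=\sin\frac{\pi}{2\alpha}$ are swapped relative to the definitions printed after the theorem statement, but they are the ones the computation actually produces and the ones consistent with Theorem \ref{gennu}, so you have in effect corrected a typo in the paper.
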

\noindent where $G_{\gamma}(\tau)$ is a random variable with generalized gamma distribution having probability density function
$$g_{\gamma}(y;\tau)=\gamma\frac{y^{\gamma-1}}{\tau}\exp\left(-\frac{y^{\gamma}}{\tau}\right),\qquad y,\gamma,\tau>0$$
and $$a_{\alpha}=\sin\frac{\pi}{2\alpha},\qquad b_{\alpha}=\cos\frac{\pi}{2\alpha}.$$
\begin{proof}
As in theorem \ref{gennu}, we have that
\begin{align}\mathbb{P}&(X_{\alpha}(S_{\theta}(t))\in dx)/dx=\int_{0}^{+\infty}u_{\alpha}(x,s)\;h_{\theta}(s,t)\;ds\nonumber\\
=&-\frac{\theta t}{\pi x}\sum_{k=1}^{\infty}\frac{\left(-x\right)^k}{k!}\sin\left(k\pi\frac{\alpha-1}{2\alpha}\right)\Gamma\left(1+\frac{k}{\alpha}\right)\int_0^{+\infty}s^{-\frac{k}{\alpha}-\theta-1}\text{W}_{-\theta,1-\theta}\left(-\frac{t}{s^\theta}\right)ds\nonumber\\
=&-\frac{1}{\pi x}\sum_{k=1}^{\infty}\frac{1}{k!}\sin\left(k\pi\frac{\alpha-1}{2\alpha}\right)\Gamma\left(1+\frac{k}{\theta\alpha}\right)\left(-\frac{x}{t^{\frac{1}{\theta\alpha}}}\right)^k.\nonumber
\end{align}
\noindent The obtained power series can be shown to be convergent for $\alpha\theta>1$ by using the Stirling approximation formula for the gamma function. By using now the integral representation of the gamma function and formula (\ref{sineseries}) as in theorem \ref{gennu}, we finally obtain
\begin{align}\mathbb{P}(X_{\alpha}(&S_{\theta}(t))\in dx)/dx\nonumber\\
=&-\frac{\alpha t}{\pi x}\int_0^{+\infty}s^{\alpha-1}e^{-ts^{\alpha}}\sum_{k=1}^{\infty}\frac{1}{k!}\sin\left(k\pi\frac{\alpha-1}{2\alpha}\right)\left(-xs^{\frac{1}{\theta}}\right)^k\;ds\nonumber\\
=&\frac{\alpha t}{\pi x}\int_0^{+\infty}e^{-xs^{\frac{1}{\theta}}\cos\frac{\pi(\alpha-1)}{2\alpha}}\sin\left(xs^{\frac{1}{\theta}}\sin\frac{\pi(\alpha-1)}{2\alpha}\right)s^{\alpha-1}e^{-ts^{\alpha}}ds\nonumber\\
=&\frac{\alpha t}{\pi x}\int_0^{+\infty}e^{-xs^{\frac{1}{\theta}}\sin\frac{\pi}{2\alpha}}\sin\left(xs^{\frac{1}{\theta}}\cos\frac{\pi}{2\alpha}\right)s^{\alpha-1}e^{-ts^{\alpha}}ds.\nonumber\end{align}
The change of variables $y=s^{\frac{1}{\theta}}$ completes the proof.
\end{proof}

\noindent We conclude our analysis by showing that theorem \ref{stablethm} permits us to obtain a series representation for the exact distribution of asymmetric stable processes with exponent $1<\nu<2$, $\nu$ being related to $\alpha$ and $\theta$, and skewness parameter $\beta$, $0<\lvert \beta\lvert <1$.\\
\noindent Consider the pseudo-process $X_{\alpha}(t)$ and an independent stable subordinator $S_{\theta}(t)$ having characteristic function
\begin{equation}\label{subchar}\mathbb{E}\left[e^{i\gamma S_{\theta}(t)}\right]=e^{-t\lvert \gamma\lvert^{\theta}e^{-i\frac{\theta\pi}{2}\sgn{\gamma}}}.\end{equation}
\noindent The characteristic function of the subordinated pseudo-process $$Y_{\alpha,\theta}(t)=X_{\alpha}(S_{\theta}(t))$$ is given by
\begin{align}\label{subpseudochar}\mathbb{E}\left[e^{i\gamma Y_{\alpha,\theta}(t)}\right]=&\mathbb{E}\left[\mathbb{E}\left[e^{i\gamma X_{\alpha}(S_{\theta}(t))}\big\lvert S_{\theta}(t)\right]\right]\nonumber\\
=&\mathbb{E}\left[e^{-iS_{\theta}(t)\lvert \gamma\lvert^{\alpha}\sgn\gamma}\right]\nonumber\\
=&e^{-t\lvert \gamma\lvert^{\alpha\theta}e^{i\frac{\theta\pi}{2}\sgn\gamma}}\nonumber\\
=&e^{-t\lvert \gamma\lvert^{\alpha\theta}\cos\frac{\pi\theta}{2}\left(1+i\tan\frac{\pi\theta}{2}\sgn\gamma\right)}\end{align}
where we have used formulas (\ref{fractionalcharfun}) and (\ref{subchar}).\\
\noindent The characteristic function of a stable process $S_{\nu}(\sigma, \beta, \mu; t)$ of exponent $\nu\neq1$ reads
\begin{equation}\label{stableprocessescharfun}\mathbb{E}\left[e^{i\gamma S_{\nu}(\sigma, \beta, \mu; t)}\right]=e^{-\sigma^{\nu}\lvert \gamma\lvert^{\nu}\left(1-i\beta\sgn\gamma\tan\frac{\pi\nu}{2}\right)+i\mu\gamma}\end{equation}
\noindent where $0<\nu<2$ is the exponent of the stable process, $\sigma>0$ is the dispersion parameter, $\beta\in[-1,1]$ is the skewness parameter and $\mu\in\mathbb{R}$ is the location parameter. By comparing formulas (\ref{subpseudochar}) and (\ref{stableprocessescharfun}) we obtain that
\begin{equation}\label{indistribution}Y_{\alpha,\theta}(t)\overset{i.d.}{=}S_{\nu}(\sigma,\beta,\mu;t)\end{equation}
\noindent with
\begin{equation}\label{paramvalues}\nu=\alpha\theta,\qquad\beta=-\frac{\tan\frac{\pi\theta}{2}}{\tan\frac{\pi\alpha\theta}{2}},\qquad\sigma=\left(\cos\frac{\pi\theta}{2}\right)^{\frac{1}{\nu}},\qquad\mu=0.\end{equation}
\noindent In order for the relationship (\ref{indistribution}) to hold, we must have that $\nu\in(0,2]$ and $\beta\in[-1,1]$. This poses no problem since, given $\nu\in(0,2)$ and $\beta\in(0,1)$, it is always possible to choose $\theta\in(0,1)$ and $\alpha>1$ such that the formulas in (\ref{paramvalues}) are satisfied. As we will see, the extension to negative values of $\beta$ is straightforward. This permits us to represent stable processes as pseudo-processes time-changed with stable subordinators. For stable processes with exponent $\nu$, $1<\nu<2$ and skewness parameter $\beta\in(0,1)$, we obtain the following formula from the proof of theorem \ref{stablethm}:
\begin{equation}\label{PY}\mathbb{P}(Y_{\alpha,\theta}(t)\in dx)/dx=-\frac{1}{\pi x}\sum_{k=1}^{\infty}\frac{1}{k!}\sin\left(k\pi\frac{\alpha-1}{2\alpha}\right)\Gamma\left(1+\frac{k}{\theta\alpha}\right)\left(-\frac{x}{t^{\frac{1}{\theta\alpha}}}\right)^k.\end{equation}
\noindent Our representation (\ref{PY}) of the probability density function of an asymmetric stable process is consistent with that reported by Zolotarev \cite{zolotarev} (see theorem 2.4.2).\\
\noindent For $\nu=\alpha\theta>1$, the pseudo-distribution (\ref{PY}) represents the fundamental solution to the fractional partial differential equation
\begin{equation}\label{RFeqFin}
\begin{dcases}
\frac{\partial u}{\partial t}(x,t)={}_{x}D^{\alpha\theta}_{\theta}u(x,t),\qquad \alpha\theta>1,\;\theta\in(0,1)\\
u(x,0)=\delta(x).
\end{dcases}\end{equation}
\noindent This can be easily proved by showing that the Fourier transform of the solution to equation (\ref{RFeqFin}) coincides with the characteristic function (\ref{subpseudochar}). However, as formulas (\ref{paramvalues}) show, the subordinated pseudo-process $Y_{\alpha,\theta}(t)$ is identical in distribution to a genuine stochastic process only for suitable choices of $\alpha$ and $\theta$. The Cauchy process can be obtained as a particular case by setting $\theta=\frac{1}{\alpha}$, which yields
\begin{align}\mathbb{E}\label{cauchycf}\left[e^{i\gamma Y_{\alpha,\theta}(t)}\right]=&e^{-t\lvert \gamma\lvert \cos\frac{\pi}{2\alpha}\left(1+i\tan\frac{\pi}{2\alpha}\sgn\gamma\right)}\nonumber\\
=&e^{-i\gamma t\sin\frac{\pi}{2\alpha}-t\lvert \gamma\lvert \cos\frac{\pi}{2\alpha}}.\end{align}
\noindent Formula (\ref{cauchycf}) represents the characteristic function of a Cauchy process with probability density function
\begin{equation}\label{cauchydensity}\mathbb{P}(Y_{\alpha,\theta}(t)\in dx)/dx=\frac{1}{\pi}\;\frac{t\cos\frac{\pi}{2\alpha}}{t^2+2xt\sin\frac{\pi}{2\alpha}+x^2}.\end{equation}

\noindent The maximum of the Cauchy density (\ref{cauchydensity}) is located on the negative half-axis as shown in figure \ref{fig:cauchy}.
\begin{figure}[t]
\centering
\includegraphics[scale=0.9]{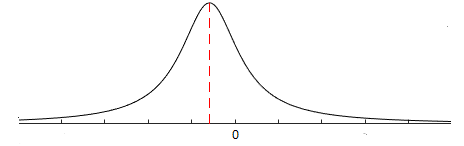}
\vspace{-3mm}
\caption{the Cauchy distribution obtained as a subordinated pseudo-process has its modal value on the negative half-axis.}
\label{fig:cauchy}
\end{figure}

\noindent We observe that the skewness parameter in (\ref{paramvalues})
$$\beta=-\frac{\tan\frac{\pi\theta}{2}}{\tan\frac{\pi\alpha\theta}{2}}$$
is positive for $0<\theta<1$ and $1<\alpha\theta<2$. Formula (\ref{PY}) thus describes the probability density function of a stable process with positive skewness parameter. By using the well-known property of stable processes
$$-S_{\nu}(\sigma,\beta,\mu;t)\overset{i.d.}{=}S_{\nu}(\sigma,-\beta,-\mu;t),\qquad \nu\neq 1$$
\noindent the series representation (\ref{PY}) can be immediately extended to stable processes with negative skewness parameter.

\end{document}